\documentclass[12pt,reqno]{amsart}

\usepackage{amsmath}
\usepackage{amsfonts}
\usepackage{amssymb}
\usepackage{verbatim}
\usepackage[left=2.9cm,right=2.9cm,top=3cm,bottom=3cm]{geometry}
\usepackage{enumerate}


\usepackage{graphicx}

\newcommand{\calS}{{\mathcal S}}

\newcommand{\calG}{{\mathcal G}}

\newcommand{\supp}[1]{{supp\,(#1)}}

\newcommand{\calA}{{\mathcal A}}

\newcommand{\calC}{{\mathcal C}}

\newcommand{\N}{\mathbb N}



\newtheorem{theorem}{Theorem}
\newtheorem{lemma}{Lemma}
\newtheorem{rem}{Remark}

\newtheorem{example}{Example}
\newtheorem{definition}{Definition}

\title[Constrained Monte Carlo Markov Chains on Graphs]{Constrained Monte Carlo Markov Chains on Graphs}

\author{Roy Cerqueti}
\address{University of Macerata, Department of Economics and Law.
Via Crescimbeni 20, I-62100, Macerata, Italy}
\email{roy.cerqueti@unimc.it}

\author{Emilio De Santis}
\address{University of Rome La Sapienza, Department of Mathematics.
Piazzale Aldo Moro, 5, I-00185, Rome, Italy}
\email{desantis@mat.uniroma1.it}

\begin{document}

\begin{abstract}
This paper presents a novel theoretical Monte Carlo Markov chain procedure in the framework of graphs. It specifically deals with the construction of a Markov chain whose empirical distribution converges to a given reference one. The Markov chain is constrained over an underlying graph, so that states are viewed as vertices and the transition between two states can have positive probability only in presence of an edge connecting them. The analysis is carried out on the basis of the relationship between the support of the target distribution and the connectedness of the graph. 
\medskip
\medskip
\newline
\emph{Keywords:} Markov chain; Graph; Convergence of distribution.
\newline
\medskip
\emph{AMS MSC 2010:} 60J10, 62E25, 60B10.
\end{abstract}

\maketitle

\section{Introduction}


Monte Carlo Markov Chain (MCMC) problems represent a challenging
research theme not only for their natural practical implications but
also for the related methodological advancements.

The idea of a MCMC problem is to build a reversible regular Markov
chain with a target stationary distribution (see e.g. \cite{handbook,
fearn1, tierney}). To pursue this scope, several algorithms have
been proposed in the literature. Some of them are worthy to be
mentioned. 

In the Metropolis Hastings algorithm (see
\cite{hastings,metropolis}), a transition kernel is employed to
iteratively generate a value $y$ at time $t+1$ on the basis of the value $x$ observed at time $t$.

When the states space is huge the Metropolis Hastings algorithm must
be used with great care to avoid that the probabilities of
transition become too small and in practice unusable on the computer
simulation.


The Gibbs sampler, see \cite{geman1984}, solves
the problem of the huge cardinality in presence of a multivariate structure for the states space.
The strategy is to change state by changing only one of the
components of the multivariate state. In so doing, there are few
transition probabilities that are different from zero; therefore,
they remain not too small in order to be used on a computer. The
Gibbs sampler loses meaningfulness when the multivariate structure of
the state space is not identified.

The debate on the validity of the Gibbs sampler has been remarkably
enriched by \cite{green}. In the quoted paper, the Author elaborates
on \cite{tierney} and deals with a Bayesian choice of a vector of
models, whose individual components are selected among a set of
countable candidates. Each model have a number of unknown
parameters; such a number is not constant, and depends on the
considered component of the vector of models. In this context of not
fixed dimension of the parameter set, \cite{green} adapts to this context the
Metropolis-Hastings algorithm, by proposing a so-called "reversible
jump" version of it (see also \cite{scaccia} for further advancements). In \cite{carlinchib}, the Authors observe that
the convergence issues of the MCMC procedures arise always when the
problem involves the selection of one among a number of different
model specifications. To solve the convergence matter,
\cite{carlinchib} proposes a modified Gibbs sampler procedure
obtained by introducing a sort of average of the considered models.
In general, the issue of the convergence is a critical aspect, as also akcnowledged by Persi Diaconis in his long experience of scientific research and publications in the field. In this respect, we strongly recommend the reading of Diaconis' personal view on the matter, with some relevant insights of the future development of the MCMC in both areas of mathematical advancements and practical applications (see \cite{diac1, diac2}).

Our paper adds to this debate by dealing with a \emph{constrained} MCMC problem. In particular, we construct some Markov chains whose
empirical distributions converge to a target distribution as time
goes to infinity and which are constrained to move among the nodes
that are adjacent in an assigned graph.


To present the problem in a proper way, some notation is needed.
We will refer hereafter to a \emph{graph} $G =(\calS, E )$, being
$\calS$ the set collecting the nodes and $E$ the set of the edges. 
The nodes $ s, t \in \calS$ are declared \emph{adjacent} in $G$ if
$\{s,t\}\in E$ or $s=t$.

We now state a definition linking graphs and stochastic
processes.

\begin{definition}\label{dinamica}
We say that a stochastic process  $X =( X(t) : t \in \mathbb{N})$ on
$\calS$ is \emph{consistent} with the graph $G=(\calS, E) $ if, for each $t\in \mathbb{N}$,      $ X(t)$
and $ X(t+1) $ are adjacent in $G$ with probability one.
\end{definition}

Given two graphs $G=(\calS, E)$ and $G'=(\calS', E')$ we
say that $G'$ is a \emph{subgraph} of $G$ if $\calS'\subset \calS$
and $E'\subset E$, and we write $G' \subset G$.

A particular class of subgraphs will be of interest in the
following. Specifically, the subgraph $G' =(\calS' , E') \subset G=(\calS , E)  $ is said to be
an \emph{induced subgraph} of $G$ if $s,t \in \calS'$ and $\{ s,t \}
\in E$ imply $\{ s,t \} \in E' $. In this case we write $G' = G
[\calS']$ in order to stress the dependence on the set of nodes $\calS'$.

We notice that Definition \ref{dinamica} implies that if a process $X=( X(t) : t \in \mathbb{N})$ is
consistent with a graph $G $ then it is also consistent with any
graph $G' $   such that $ G \subset G'$.


From now we only consider  $| \calS | < \infty $ and consequently a finite graph $G=(\mathcal{S},E)$.
Given  a finite graph $G=(\mathcal{S},E)$ and a distribution
$\mu=(\mu(s):s \in \calS)$, we will provide in this paper an answer
to the following question:

\begin{itemize}
\item[\textbf{Q:}] \emph{Is it possible to construct a (not
necessarily homogeneous) Markov chain $X =( X(t) : t \in
\mathbb{N})$ which is consistent with $G$ and such that its
empirical distribution converges almost surely to $\mu$
as $t$ goes to infinity?}
\end{itemize}

More precisely we aim at constructing a reversible
Markov chain $X =( X(t) : t \in \mathbb{N})$  with the following
properties:
 $X$ is consistent with the graph $G$ and
 \begin{equation}\label{limite1bis}
\lim_{t \to \infty}  \frac{1}{t} \sum_{m=0}^{t-1} \mathbf{1}_{\{ X
(m)=s \}} = \mu(s)  , \qquad s \in \calS \qquad a.s..
\end{equation}

The motivations to pose question
 \textbf{Q} are basically three:
 \begin{itemize}
 \item[a)] we
  face the problem of the large cardinality
  of the states space by controlling the
  transitions among the states through
  the edges of a graph;
   \item[b)] we introduce a clear structure of the states space through the graph so that
   one can think to get some
  desired properties such as stochastic monotonicity or fast convergence;
  \item[c)]
    the introduction of a graph which constrains the positive transitions of the Markov chain describes several real-life evolution phenomena, where it is possible to move in a single step only from a state to an "adjacent one".
 \end{itemize}

In the following, we provide an answer to question \textbf{Q} in all
possible situations and we show that when $G= (\calS, E)$ is connected then it is
possible to construct such a (not necessarily time homogeneous) Markov chain.

\section{Main results}
For a target probability measure $(\mu(s): s \in \calS)$ and a graph $G$, all the possible situations, along with the related answers to question \textbf{Q}, can be distinguished in four cases:
\begin{itemize}
\item[$(i)$] If the distribution $\mu$ is concentrated on a unique $\bar s \in \calS$, i.e. $\mu=\delta_{\bar
s}$, then one can construct the constant Markov chain $X=(X(t):t \in
\mathbb{N})$ such that $X(t)=\bar s$, for each $t$. By Definition
\ref{dinamica} and the concept of adjacent states, one has that $X$
is consistent with $G$ and \eqref{limite1bis} is trivially satisfied.
\item[$(ii)$] If $G[\supp{\mu}]$ is not connected but $\supp{\mu}$ is contained in a connected
component of $G$, then one can construct a nonhomogeneous Markov
chain which is consistent with $G$ and fulfilling condition \eqref{limite1bis}
(see Theorem \ref{dinamic2} and Theorem \ref{thdin2} part c. below).
\item[$(iii)$] If $G[\supp{\mu}]$ is not connected and $\supp{\mu}$ is not contained in a unique connected component of $G$, then it does not exist a stochastic
process which is consistent with $G$ and fulfilling
\eqref{limite1bis} (see Theorem \ref{thdin2} part b. below).
\item[$(iv)$] If $G[\supp{\mu}]$ is connected, then one
can construct a homogeneous Markov chain consistent with $G$ which
satisfies \eqref{limite1bis} (see Theorem \ref{thdin2} part a. below).
\end{itemize}

We now deal with item  $(ii)$.

Notice that, in this case, there exists a connected
component of $\calS$, say $\hat \calS$, such that $\supp{\mu}
\subset \hat \calS$ and $\supp{\mu}
\neq \hat \calS $. Without loss of generality and to avoid
the introduction of further notation, we assume that $G$ is
connected and we identify  $\hat \calS $
with $ \calS$.

For a given distribution $\mu=(\mu(s):s \in \calS)$, let us define, in case $(ii)$, the non-empty set
$$
\calA_k=\left \{s \in \calS: \mu(s) <\frac{1}{k} \right \},\qquad k
\geq 2
$$
and let the distribution $\eta_k=(\eta_k(s):s \in \calS)$ be
$$
\eta_k (s) = \frac{1}{|\calA_k|} \mathbf{1}_{\{ s \in \calA_k \}},
\qquad s \in \calS,
$$
i.e. $\eta_k$ is the uniform distribution on $\calA_k$. We also
define the distribution $\mu_k=(\mu_k(s):s \in \calS)$ as
\begin{equation}
\label{mukka} \mu_k = \frac{1}{k} \eta_k + \frac{k-1}{k} \mu .
\end{equation}
Notice that
\begin{equation}
\label{mukka1}
|| \mu_k  - \mu ||_{TV} =  \frac{1}{k}     || \eta_k  - \mu ||_{TV}
\leq \frac{1}{k} ,
\end{equation}
where $  || \cdot ||_{TV} $ is the total variation norm (see e.g. \cite{lindvall}).

Let $N$ denote the cardinality of $\calS$. Since $G[\supp{\mu}]$ is not connected, then it contains at least two points. Since $\supp{\mu} \subset \calS$ and $\calS$ is connected, then $N \geq 3$.
  By construction, for any
 $$
k > \bar k: =  \left    \lceil       \frac{1}{  \min \{ \mu
(s) >0 : s \in \calS\}} \right  \rceil
$$
  and since $ N \geq 3$, one
has
\begin{equation}\label{31ott}
\mu_k (s) \geq \frac{1}{(N-1) k}, \qquad s \in \calS .
\end{equation}
Let us label the elements of $\calS=\{ s^1, \ldots , s^{N} \}$ such that
$$
\mu( s^1) \geq \mu( s^2)\geq \cdots \geq \mu( s^N) .
$$
According to definition \eqref{mukka}, for $k > \bar k$,  one also obtains
\begin{equation}\label{muk}
    \mu_k( s^1) \geq \mu_k( s^2)\geq \cdots \geq \mu_k( s^N) >0 .
\end{equation}

We construct the transition matrix $P^{(\mu_k,G)}= (p_{l, m}: l,m = 1,
\ldots , N)$ related to the distribution $\mu_k$ and to the graph
$G=(\calS,E)$. The dependence on $k$ of the elements of matrix $P^{(\mu_k,G)}$ is conveniently omitted. For each $l,m=1,\dots,N$,
\begin{equation}\label{ordinati}
    p_{l,m } = \left\{
\begin{array}{ll}
    p, & \hbox{if $l<m$ and $\{s^l,s^m\} \in E$;} \\
    \frac{\mu_k(s^m)}{\mu_k(s^l)}p, & \hbox{if $l>m$ and $\{s^l,s^m\} \in E$;} \\
  p_{l}, & \hbox{if $l=m$;} \\
    0, & \hbox{otherwise,} \\
\end{array}
\right.
\end{equation}
where
\begin{equation}\label{p11}
 p_{l} =1- p \left [ \sum_{m': m'> l } \mathbf{1}_{\{ \{s^l,s^{m'}\} \in E \}}
 + \sum_{m':m'<l}\frac{\mu_k({s^{m'}})}{\mu_k(s^l)}
\mathbf{1}_{\{ \{s^l,s^{m'}\} \in E \}}    \right ]
\end{equation}
and
\begin{equation}\label{p}
p= \min_{l=1, \ldots , N}\frac{1}{2\left(\sum_{m':m'>l}
\mathbf{1}_{\{ \{s^l,s^{m'}\} \in E \}}
+\sum_{m':m'<l}\frac{\mu_k(s^{m'})}{\mu_k(s^l)} \mathbf{1}_{\{
\{s^l,s^{m'}\} \in E \}} \right)}   .
\end{equation}
Notice that by definition $p\leq \frac{1}{2}$. In fact, since $G$ is connected, there exists
at least an edge $\{ s^1,s^{m}\} \in E$, with $m > 1$; thus the
denominator of \eqref{p} is at least equal to $  2$, when $l =1$. Clearly,
$P^{(\mu_k,G)}$ is a transition or stochastic matrix.

 Definition \eqref{ordinati} assures that the couple
$(\mu_k, P^{(\mu_k,G)})$ is reversible.  Moreover, $P^{(\mu_k,G)} $
is irreducible, since $G$ is connected; thus, $\mu_k$ is the unique
invariant distribution of $P^{(\mu_k,G)}$. The transition matrix
$P^{(\mu_k,G)} $ is also aperiodic since, by \eqref{p11} and \eqref{p}, $p_{l} \geq
\frac{1}{2}$ for $l=1, \ldots , N$.

We introduce the ergodic coefficient of Dobrushin  (see \cite{dobru} and  \cite{bremo} p. 235),
which is defined as
\begin{equation}\label{dobru}
    \delta(P)=1-\inf_{i,j =1, \dots, N} \sum_{h =1}^Np_{i,h}\wedge p_{j,h}
\end{equation}
where $P=(p_{i,j}:i,j =1, \dots, N)$ is a stochastic matrix.
%
\begin{lemma}
\label{lem:D} Given the transition matrix   $P^{(\mu_k,G)}$ on $\calS$ constructed above, with $N = |\calS| \geq 3$,
the Dobrushin's ergodic coefficient can be bounded from above as follows
$$
\delta((P^{(\mu_k,G)})^{N-1}) \leq 1- \left (    \frac{c_N}{k}
\right )^{N-1} , 
$$
for any $k > \bar k $, where  $c_N =\frac{1}{2(N-1)^2} $.
\end{lemma}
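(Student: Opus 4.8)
The plan is to bound the Dobrushin coefficient $\delta((P^{(\mu_k,G)})^{N-1})$ from above by exhibiting, for every pair of starting states $i,j$, a uniform lower bound on $\sum_h (P^{N-1})_{i,h}\wedge (P^{N-1})_{j,h}$; by the definition \eqref{dobru} it suffices to find a single state $h^\star$ (indeed we will use $h^\star$ corresponding to $s^1$, the state of maximal $\mu_k$-mass) such that $(P^{N-1})_{i,h^\star}\geq (c_N/k)^{N-1}$ for \emph{all} $i$. Then $\sum_h (P^{N-1})_{i,h}\wedge (P^{N-1})_{j,h}\geq (P^{N-1})_{i,h^\star}\wedge (P^{N-1})_{j,h^\star}\geq (c_N/k)^{N-1}$, which gives the claim.

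First I would record a one-step lower bound on the transition probabilities that occur. From \eqref{ordinati}, whenever $\{s^l,s^m\}\in E$ the entry $p_{l,m}$ equals either $p$ (if $l<m$) or $\frac{\mu_k(s^m)}{\mu_k(s^l)}p$ (if $l>m$); in either case, using \eqref{31ott} which gives $\mu_k(s)\geq \frac{1}{(N-1)k}$ and $\mu_k(s)\leq 1$ for all $s$, we get $p_{l,m}\geq \frac{p}{(N-1)k}$. Also $p\geq \frac{1}{2(N-1)}$, since each of the two sums in the denominator of \eqref{p} has at most $N-1$ terms and each term $\frac{\mu_k(s^{m'})}{\mu_k(s^l)}\leq (N-1)k$ — wait, that bound is too weak; instead I would bound the denominator of \eqref{p} crudely by $N-1 + (N-1)^2 k \le (N-1)^2 k + (N-1)$, hence $p \ge \frac{1}{2(N-1)^2 k}$ roughly. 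Combining, every edge-transition satisfies $p_{l,m}\geq \frac{1}{2(N-1)^3 k^2}$, and the holding probabilities satisfy $p_{l,l}\geq \frac12$. (I will need to recheck these constants so that the final product comes out as $(c_N/k)^{N-1}$ with $c_N=\frac{1}{2(N-1)^2}$; this constant-chasing is the fiddly part, not a conceptual one.)

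Next, the key structural step: since $G$ is connected, for any vertex $s^i$ there is a path in $G$ from $s^i$ to $s^1$ of length at most $N-1$. Following that path gives a product of at most $N-1$ edge-transitions; if the path is shorter than $N-1$ steps we pad it using the holding probabilities $p_{l,l}\geq \frac12$ at $s^1$. Hence $(P^{N-1})_{i,1}$ is at least the product of these, i.e. at least $\left(\min\{\text{edge-prob},\ \tfrac12\}\right)^{N-1}$, which by the previous paragraph is at least $(c_N/k)^{N-1}$ once the constants are arranged (note $\frac12 \ge \frac{c_N}{k}$ trivially since $c_N/k \le \frac{1}{2(N-1)^2}\le \frac12$, so the minimum is the edge-probability bound). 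This holds for every $i$, in particular for both $i$ and $j$, so each term $(P^{N-1})_{i,1}\wedge(P^{N-1})_{j,1}\geq (c_N/k)^{N-1}$, and plugging into \eqref{dobru} yields $\delta((P^{(\mu_k,G)})^{N-1})\leq 1-(c_N/k)^{N-1}$.

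The main obstacle is purely bookkeeping: getting the one-step lower bounds on $p$, on $p_{l,m}$, and on the holding probabilities to multiply, over a path of length $N-1$, into exactly $(c_N/k)^{N-1}$ with $c_N=\frac{1}{2(N-1)^2}$. The conceptual content — (a) reducing Dobrushin's coefficient to a single good column, (b) using connectedness to reach the heaviest state $s^1$ in $\le N-1$ steps, (c) using aperiodicity/large holding probability to pad the path to exactly $N-1$ steps — is straightforward once one has the uniform lower bound $\mu_k(s)\ge \frac{1}{(N-1)k}$ from \eqref{31ott}.
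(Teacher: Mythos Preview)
Your overall strategy is correct and is essentially the paper's: lower-bound the nonzero one-step transitions, use connectedness of $G$ to build a path of length at most $N-1$, pad with self-loops $p_{l,l}\ge\frac12$, and feed the resulting lower bound on the entries of $(P^{(\mu_k,G)})^{N-1}$ into \eqref{dobru}. (The paper actually bounds \emph{every} entry of $(P^{(\mu_k,G)})^{N-1}$ below by $(c_N/k)^{N-1}$, not just one column, but your single-column version targeting $s^1$ is already enough for Dobrushin.)

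However, the trouble you flag with the constants is not mere bookkeeping; it comes from overlooking the one observation the paper exploits: the \emph{ordering} of the states. By \eqref{muk} we have $\mu_k(s^1)\ge\mu_k(s^2)\ge\cdots\ge\mu_k(s^N)$, so for $l>m$ the ratio $\mu_k(s^m)/\mu_k(s^l)$ is $\ge 1$, and hence $p_{l,m}=\frac{\mu_k(s^m)}{\mu_k(s^l)}\,p\ge p$. Together with $p_{l,m}=p$ for $l<m$, this gives $p_{l,m}\ge p$ for \emph{every} edge transition, not merely $p_{l,m}\ge p/((N-1)k)$ as you wrote. The same ordering gives the upper bound $\mu_k(s^{m'})/\mu_k(s^l)\le k(N-1)$ for $m'<l$ (this is \eqref{musumu}); since the bracket in \eqref{p} has at most $N-1$ terms, each at most $k(N-1)$, the denominator of \eqref{p} is at most $2k(N-1)^2$, whence $p\ge \frac{1}{2k(N-1)^2}=c_N/k$. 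Thus every nonzero entry of $P^{(\mu_k,G)}$ is at least $c_N/k$, and your path argument then yields $(P^{N-1})_{i,j}\ge(c_N/k)^{N-1}$ directly. Without using the ordering, your crude estimate $p_{l,m}\ge \frac{1}{2(N-1)^3 k^2}$ only produces a bound of the form $1-(c'/k^2)^{N-1}$, which does not recover the lemma as stated.
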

\begin{proof}
For $k > \bar k  $,
condition  $N \geq 3 $ and inequalities  \eqref{31ott} and \eqref{muk} provide
\begin{equation}\label{musumu}
    1 \leq \frac{\mu_k(s^{m})}{\mu_k(s^l)} \leq k(N-1), \qquad \text{
for } l>m.
\end{equation}
Thus, by 
\eqref{musumu}  one obtains
$p \geq \frac{c_N}{ k}$, for $k > \bar k $.
 Then one has that, if $p_{l,m}\not=0$,
\begin{equation}
\label{p2} p_{l,m} \geq  \frac{c_N}{k}  .
\end{equation}
For $k > \bar k $,
 since the graph $G$ is connected and $p_{l} \geq \frac{1}{2}$  for each $l = 1, \ldots , N$,
then \eqref{p2} gives that
$$
p_{l,m}^{(N-1)} \geq     \left (    \frac{c_N}{k}    \right )^{N-1}
, \qquad \forall \, l,m =1, \dots, N,
$$
where $p_{l,m}^{(N-1)}$ is the transition probability from $s^l$ to
$s^m $ in $(N-1)$ steps.
\newline
Then, by definition of the ergodic coefficient of Dobrushin in \eqref{dobru}, one has the thesis.
\end{proof}

Given an arbitrary distribution over $\calS$, namely $\lambda
=(\lambda(s): s\in \calS)$,  we construct a non-homogeneous Markov
chain $X=(X(t):t \in \mathbb{N})$ with $\lambda$ as initial
distribution.  The transition matrix of the Markov chain $X$ at time $t \in \mathbb{N}$
will be  denoted by $P(t)=(p_{i,j} (t):i,j=1, \dots,N)$. 

Let us consider
an  increasing sequence of times $(t_\ell:\ell \in
\mathbb{N})$, and let us define
\begin{equation}\label{Penne}
    P(t) = \sum_{k =\bar k +1}^{\infty} P^{(\mu_k,G)} \mathbf{1}_{\{ t \in [t_k
, t_{k+1} ) \}} .
\end{equation}

%

\begin{theorem}\label{dinamic2}
Consider a connected graph $G=(\calS,E) $ and a distribution
$\mu=(\mu(s): s\in \calS)$. Assume that $G[\supp{\mu}]$ is not connected but $\supp{\mu}$ is contained in a connected
component of $G$.
Any Markov chain $X =( X(t) : t \in
\mathbb{N})$ constructed above with transition matrix given in \eqref{Penne},
with sequence of times $(t_\ell   = \ell^{5N}     :\ell \in \N)$  is consistent with $G$ and  \eqref{limite1bis} holds true, i.e. 
$$
\lim_{t \to \infty}  \frac{1}{t} \sum_{m=0}^{t-1} \mathbf{1}_{\{ X
(m)=s \}} = \mu(s)  , \qquad s \in \calS \qquad a.s..
$$
\end{theorem}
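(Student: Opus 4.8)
The plan is to establish the almost-sure convergence of the empirical distribution in two stages: first show that the time-$t$ marginal law of $X(t)$ converges in total variation to $\mu$, and then upgrade this to the strong law for the empirical averages. Consistency with $G$ is immediate: by \eqref{ordinati} each $P^{(\mu_k,G)}$ has $p_{l,m}=0$ whenever $\{s^l,s^m\}\notin E$ and $l\neq m$, so every transition the chain can make is along an edge of $G$ (or a self-loop), hence $X(t)$ and $X(t+1)$ are adjacent with probability one.

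For the marginal convergence, I would exploit Lemma~\ref{lem:D} together with the blockwise-constant structure of \eqref{Penne}. On the time interval $[t_k,t_{k+1})$ the chain is homogeneous with kernel $P^{(\mu_k,G)}$ and stationary distribution $\mu_k$; the number of steps in that block is $t_{k+1}-t_k = (k+1)^{5N}-k^{5N} \sim 5N\,k^{5N-1}$. Writing $\lambda_k$ for the law of the chain at time $t_k$, a standard Dobrushin contraction estimate gives
\begin{equation}\label{eq:contraction}
\| \lambda_{k+1} - \mu_k \|_{TV} \;\le\; \delta\big((P^{(\mu_k,G)})^{N-1}\big)^{\lfloor (t_{k+1}-t_k)/(N-1)\rfloor}\,\|\lambda_k - \mu_k\|_{TV}\;\le\;\Big(1-(c_N/k)^{N-1}\Big)^{\lfloor (t_{k+1}-t_k)/(N-1)\rfloor}.
\end{equation}
The point of the choice $t_\ell=\ell^{5N}$ is precisely that the exponent grows like $k^{5N-1}$ while $1-(c_N/k)^{N-1}$ approaches $1$ only at rate $k^{-(N-1)}$; since $5N-1 > N-1$, the right-hand side of \eqref{eq:contraction} tends to $0$ as $k\to\infty$. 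Combining with $\|\mu_k-\mu\|_{TV}\le 1/k$ from \eqref{mukka1} and the triangle inequality, one gets $\|\lambda_k-\mu\|_{TV}\to 0$; interpolating over the (now short, relative to the block length) initial segment of each block then yields $\|\mathrm{Law}(X(t))-\mu\|_{TV}\to 0$ as $t\to\infty$.

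The harder part is passing from convergence of marginals to the almost-sure statement \eqref{limite1bis}. Here I would estimate the variance of $\frac1t\sum_{m=0}^{t-1}\mathbf{1}_{\{X(m)=s\}}$ by bounding the covariances $\mathrm{Cov}(\mathbf{1}_{\{X(m)=s\}},\mathbf{1}_{\{X(n)=s\}})$ for $m<n$; the Dobrushin coefficient again controls these, giving a covariance bound that decays in $n-m$ on each block and is summable in the appropriate averaged sense thanks to the polynomial growth of the $t_\ell$. A Chebyshev bound along a suitable subsequence of times, followed by a monotonicity/sandwiching argument to fill the gaps between consecutive subsequence times (using that the increments $t_{\ell+1}-t_\ell$ grow only polynomially, so the partial sums do not jump too much relative to $t$), together with the Borel--Cantelli lemma, then delivers the almost-sure convergence along the full sequence. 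The main obstacle I anticipate is making the covariance/variance estimate uniform enough across the changing kernels $P^{(\mu_k,G)}$: because $\delta((P^{(\mu_k,G)})^{N-1})\to 1$, the mixing within a block degrades with $k$, and one must verify that the block lengths $\ell^{5N}$ grow fast enough to compensate — this is exactly the role of the exponent $5N$, and checking that it is large enough for the second-moment bound (not merely the first-moment bound \eqref{eq:contraction}) is the delicate computation.
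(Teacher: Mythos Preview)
Your plan is sound and would lead to a proof, but it takes a genuinely different route from the paper. After the same initial reductions (consistency from \eqref{ordinati} and \eqref{Penne}; reduction of \eqref{limite1bis} to convergence along the subsequence $(t_\ell)$ via $t_{\ell+1}/t_\ell\to 1$ and a sandwich), the paper carries out no second-moment or covariance computation at all. Instead it introduces an auxiliary sequence of \emph{independent} random variables $(Y(t):t\in\mathbb{N})$ with $Y(i)\sim\mu_k$ for $i\in[t_k,t_{k+1})$, and uses Lemma~\ref{lem:D} together with maximal coupling to show that, after a warm-up of $\ell^{2N}$ steps into block $\ell$, one can couple so that $X(t)\neq Y(t)$ with probability at most $\exp(-\hat c_N\lfloor\ell^{N+1}/(N-1)\rfloor)$ at each time; a union bound over the $O(\ell^{5N})$ times in the block and Borel--Cantelli then give that $X(t)=Y(t)$ for all $t\in[t_\ell+\ell^{2N},t_{\ell+1})$ and all large $\ell$, almost surely. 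Concentration of the block empirical averages is then just large deviations for i.i.d.\ Bernoulli variables (applied to the $Y$'s) combined with $\mu_\ell\to\mu$ and another Borel--Cantelli. This coupling-to-i.i.d.\ trick sidesteps precisely the obstacle you flag at the end: rather than tracking how the autocovariances of $X$ degrade as the kernels $P^{(\mu_k,G)}$ mix more and more slowly, it reduces everything to i.i.d.\ concentration with exponential tails and no cross-block bookkeeping. Your variance route is workable too --- the variance of the block-$\ell$ empirical average is of order $\ell^{N-1}/(t_{\ell+1}-t_\ell)\sim\ell^{-4N}$, which is summable, so Chebyshev plus Borel--Cantelli succeeds --- but it yields only polynomial tail bounds and requires extra care for covariances that straddle block boundaries where both the kernel and the stationary measure change. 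Either argument suffices; the paper's is the cleaner one and dissolves the ``delicate computation'' you anticipate.
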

\begin{proof}
The fact that $X$ is consistent with $G$ derives from the construction of $P$ (see \eqref{ordinati} and \eqref{Penne}).

To prove the result, we first check that 
\begin{equation}\label{limite222}
\lim_{\ell \to \infty}  \frac{1}{t_{\ell}} \sum_{m=0}^{t_{\ell}-1}
\mathbf{1}_{\{ X (m)=s \}} =\mu(s)  , \qquad s \in \calS \qquad
a.s..
\end{equation}
By definition of $(t_\ell:\ell \in \N)$ one
has
$$
\lim_{\ell \to \infty} \frac{t_{\ell+1}-t_\ell}{t_{\ell}} =0 \text{ and } \lim_{\ell \to \infty} \frac{t_{\ell+1}}{t_{\ell}} =1.
$$
Then \eqref{limite222} implies  \eqref{limite1bis}. In fact, for $t \in [t_\ell , t_{\ell+1} )$, one has
$$
\frac{1}{t_{\ell+1}} \sum_{m=0}^{t_\ell-1}  \mathbf{1}_{ \{ X(m) =s \}} \leq
\frac{1}{t} \sum_{m=0}^{t}  \mathbf{1}_{ \{ X(m) =s \}} \leq  \frac{1}{t_{\ell}} \sum_{m=0}^{t_{\ell+1}-1}  \mathbf{1}_{ \{ X(m) =s \}}
$$
$$
\leq \frac{1}{t_{\ell}} \sum_{m=0}^{t_\ell-1}  \mathbf{1}_{ \{ X(m) =s \}} +  \frac{t_{\ell+1}-t_\ell}{t_{\ell}}.
$$
Thus 
$$
\lim_{t \to \infty} \frac{1}{t} \sum_{m=0}^{t}  \mathbf{1}_{ \{ X(m) =s \}} = \lim_{\ell \to \infty } \frac{1}{t_{\ell}} \sum_{m=0}^{t_\ell-1}  \mathbf{1}_{ \{ X(m) =s \}}. 
$$

For $\varepsilon >0$ and $s \in \calS$ let us define the sequence of events $(
B_\ell (\varepsilon, s) : \ell \in \N)$ as
\begin{equation}\label{BBX}
  B_{\ell}  (\varepsilon, s) = \left \{ \Big |\mu (s) - \frac{1}{t_{\ell+1} - t_{\ell}}
  \sum_{m= t_\ell}^{t_{\ell+1} -1} \mathbf{1}_{ \{ X(m) =s \}}\Big  | < \varepsilon \right
  \} .
\end{equation}
To obtain \eqref{limite222} it is enough that, for each $
\varepsilon
>0$ and $s \in \calS$ one has
$$
\mathbb{P} \left  ( \liminf_{\ell \to \infty }   B_{\ell}
(\varepsilon, s)
\right ) =1 .
$$
Now, take the auxiliary sequence of independent random variables $ (Y(t):t \in
\N)$ with values on $\calS$ such that $Y(i)$ has distribution
$\mu_k$ if $i \in [t_k , t_{k+1})$ (see \eqref{mukka} for the
definition of $\mu_k$).

Notice that for each initial distribution $\vartheta$ on $\calS$,
 Lemma \ref{lem:D} and
Dobrushin's Theorem (see  e.g.  \cite{bremo}) give that
\begin{equation}\label{Dobr2}
    || \vartheta P(t_\ell)^{\ell^{2N}}-\mu_\ell ||_{TV}
    \leq \delta(P(t_\ell)^{N-1})^{\lfloor \frac{\ell^{2N}}{N-1}
    \rfloor} \leq \left( 1-\left(\frac{c_N}{\ell}\right)^{N-1}   \right )^{ \left \lfloor \frac{\ell^{2N}}{N-1}
 \right   \rfloor} \leq \exp \left (-c_N^{N-1} \left
\lfloor\frac{\ell^{N+1}}{N-1} \right \rfloor \right ),
\end{equation}
for any $\ell > \bar k  $.

Let $\hat c_N=c_N^{N-1}$. Given $i \geq 0$ and $k \geq 1 $, by
the maximal coupling (see \cite{lindvall}) and inequality \eqref{Dobr2} one can couple $X(
t_{\ell}+ k \ell^{2N} +i )$ with $ Y( t_{\ell}+ k \ell^{2N} +i ) $
so that
\begin{equation}\label{X=Y}
     \mathbb{P}
( X( t_{\ell}+ k \ell^{2N} +i ) \neq  Y( t_{\ell}+ k \ell^{2N} +i )
) \leq \exp \left (-\hat{c}_N \left \lfloor\frac{\ell^{N+1}}{N-1}
\right \rfloor \right ),
\end{equation}
when $ t_\ell +k \ell^{2N} +i < t_{\ell +1} $.

Let us define the sequence of events $( A_{\ell,i}: \ell \in \N , i
\in [0, \ell^{2N} ) )$ by

\begin{equation}\label{Aelle}
A_{\ell,i}=\left\{ X(t_{\ell}+ a \ell^{2N} +i) =Y(t_{\ell}+ a
\ell^{2N} +i):  a\geq 1\, \text{ and }  t_{\ell}+a \ell^{2N} +i
\leq t_{\ell+1}-1 \right\},
\end{equation}

for any $\ell \in \N$ and any integer $i\in [0, \ell^{2N} )$.

By subadditivity, one has
$$
\mathbb{P} ( A_{\ell,i})  \geq 1 - (\ell +1)^{5N} \exp \left
(-\hat{c}_N \left \lfloor\frac{\ell^{N+1}}{N-1} \right \rfloor
\right ).
$$
We also set $\hat A_\ell =\bigcap_{i =0}^{\ell^{2N}-1 } A_{\ell,i}
$. Then
$$
    \mathbb{P} ( \{X(t) = Y(t) : t \in  [ t_\ell + \ell^{2N}, t_{\ell+1 }
) \}) = \mathbb{P} ( \hat A_\ell  )   \geq
$$
\begin{equation}
 \geq 1 - (\ell +1)^{7N} \exp \left (-\hat{c}_N \left
\lfloor\frac{\ell^{N+1}}{N-1} \right \rfloor \right ).
\label{Acappuccio2}
\end{equation}
By \eqref{Acappuccio2} and the first Borel-Cantelli lemma, one has
that $\mathbb{P}(\liminf_{\ell \to \infty} \hat A_\ell) =1$.

Now, for $\varepsilon >0 $ and $s \in \calS$, let us define the
sequence of events $( \hat B_\ell  (\varepsilon ,s): \ell \in \N)$
as
\begin{equation}\label{BBY}
\hat   B_{\ell}     (\varepsilon,s ) = \left \{ \Big | \mu(s) -
\frac{1}{t_{\ell+1} - t_{\ell}}
  \sum_{m= t_\ell}^{t_{\ell+1} -1} \mathbf{1}_{ \{ Y(m) =s \}}\Big  | < \frac{\varepsilon}{2}\right
  \} .
\end{equation}
A straightforward calculation gives that
$$
\liminf_{\ell \to \infty } (\hat B_\ell    (\varepsilon ,s)    \cap
\hat A_\ell  ) \subset \liminf_{\ell \to \infty }  B_\ell
(\varepsilon , s) .
$$
Therefore to end the proof it is enough to show
\begin{equation}
\mathbb{P} (
\liminf_{\ell \to \infty } \hat B_\ell (\varepsilon, s ) ) =1.
\label{nuovaf}
\end{equation}
Such a result is a consequence of  the convergence $\mu_\ell \to \mu $, as $\ell \to \infty $,   the large deviation bounds for i.i.d.
Bernoulli random variables and the first Borel-Cantelli lemma. This
concludes the proof.
\end{proof}

\begin{rem} \label{remgenerale}
The definition of $(t_\ell: \ell \in \N)$ provided in Theorem \ref{dinamic2}
represents only one of the possible choices. In this respect, it is
interesting to note that the proof of Theorem \ref{dinamic2} can be
adapted to other sequences $(t_\ell: \ell \in \N)$. For example, one
can take $t_{\ell+1}-t_\ell \geq c \ell^{5N-1}$, with $c >0$. In
this case, for any $\ell \in \N$, there exists $I_\ell \in \N$ and an increasing sequence
$$
t_\ell^{(0)} ,  t_\ell^{(1)} , \ldots , t_\ell^{(I_\ell)}
$$
such that $t_\ell=t_\ell^{(0)} $,  $t_\ell^{(I_\ell)}=t_{\ell+1}$
and the following property holds
$$
\lim_{\ell \to \infty} \sup_{i \in \{0,1,\dots, I_\ell-1\}}
\frac{t_\ell^{(i+1)}-t_\ell^{(i)}}{t_\ell^{(i)}} =0;\qquad
\lim_{\ell \to \infty}
\frac{t_\ell^{(0)}-t_{\ell-1}^{(I_\ell-1)}}{t_{\ell-1}^{(I_\ell-1)}}=0.
$$
By reproducing the arguments of the proof of Theorem \ref{dinamic2}  for the  sequence $
t_\ell^{(0)} ,  t_\ell^{(1)} , \ldots , t_\ell^{(I_\ell)}
$, one obtains
that the Markov chain on $\calS$ with an arbitrary initial distribution and transition matrix as in  \eqref{Penne}  satisfies \eqref{limite1bis}.
\end{rem}
Next example shows that the convergence of the distribution $\mu_k$
to the distribution $\mu$ should not be taken  \emph{too fast} and
$t_{\ell+1}-t_\ell$ should be not taken \emph{too small} in order to have
\eqref{limite1bis}. 
\begin{example}\label{controes}
Let us consider a graph $G=(\calS,E)$ with
$\calS=\{s^1,s^2,s^3,s^4\}$ and
$E=\{\{s^1,s^3\},\{s^3,s^4\},\{s^2,s^4\}\}$.

Let us take the distribution $\mu=(\mu(s):s \in \calS)$ having
$\mu(s^1)=\mu(s^2)=\frac{1}{2}$, and  define $t_\ell=\ell$, for each $\ell \in \N$, and
the sequence of distributions
$(\hat \mu_\ell  : \ell \in \N)$ where
$\hat \mu_\ell=\mu_{2^\ell} $ (see the definition in \eqref{mukka}).
In particular, $||\hat \mu_\ell-\mu||_{TV} \leq \frac{1}{2^\ell}$.

We take a non-homogeneous Markov chain $X=(X(t):t \in
\N)$ with transition matrix $P(\ell) =(p_{m,n} (\ell) : m,n =1, 2,3,4 )$,  at time $\ell $, given by
$$
P(\ell)=P^{(\hat \mu_\ell, G)}, \qquad \ell \in \N.
$$

Accordingly to the definition of $p$ given in \eqref{p} and omitting the dependence of $p$ on the index $\ell$, one has
\begin{equation}\label{pbis}
p=\frac{1}{2^{\ell+1}},  \qquad \ell \in \N.
\end{equation}
Thus, \eqref{pbis} gives that
$p_{1,1}(\ell)=1-\frac{1}{2^{\ell+1}}$ at time $\ell$ (see \eqref{p11}). Therefore,
the Borel-Cantelli's Lemma guarantees that
\begin{equation}\label{pbis2}
|\{\ell \in \N : X(\ell)=s^1, X(\ell+1) \not= s^1\}|<\infty \qquad
a.s.,
\end{equation}
and therefore
\begin{equation}\label{pbis3}
\mathbb{P} (\bigcap_{s \in \calS }\{\lim_{t \to \infty}  \frac{1}{t} \sum_{m=0}^{t-1} \mathbf{1}_{\{ X
(m)=s \}} = \mu(s) \}) =0.
\end{equation}
In fact, formula \eqref{pbis2} allows to consider only $\omega \in \Omega$ such that condition $|\{\ell \in \N : X(\ell)=s^1, X(\ell+1) \not= s^1\}|<\infty$ is satisfied. If $X(\ell)=s^1$ for a finite number of $\ell$, then
$$\mathbb{P} (\lim_{t \to \infty}  \frac{1}{t} \sum_{m=0}^{t-1} \mathbf{1}_{\{ X
(m)=s^1 \}} = \frac{1}{2} ) =0;$$
if $X(\ell)=s^1$ for infinite values of $\ell$, then \eqref{pbis2} states that
$$\mathbb{P} (\lim_{t \to \infty}  \frac{1}{t} \sum_{m=0}^{t-1} \mathbf{1}_{\{ X
(m)=s^2\}} = \frac{1}{2} ) =0.$$
\end{example}


Notice that Example \ref{controes} gives a natural comparison
between our setting and the simulated annealing (see \cite{k1983}). In both cases the
hope is that the rate of convergence is fast but, if one
tries to have an \emph{excessively high} rate of convergence, it leads to local minima (case of simulated
annealing) or not convergence of the empirical measure to the target distribution $\mu$ in  our framework. In the case of excessively fast convergence rate, the response to question
\textbf{Q} might be wrong, even if the Markov chain is consistent with the graph $G$.

Next result provides an answer to \textbf{Q} for items $(iii)$ and $(iv)$.

\begin{theorem}\label{thdin2}
The following three  sentences hold true:
\begin{itemize}
    \item[a.] if $G[\supp{\mu}]$ is connected, then each  homogeneous Markov chain
$X=(X(t) : t \in \N)$ with state space $\supp{ \mu }$ having
transition matrix equal to $P^{(\mu , G[\supp{\mu}])}$ defined in
\eqref{ordinati} 
satisfies \eqref{limite1bis}. Furthermore, $X $ is consistent with
$G$;
\item[b.] if $G[\supp{\mu}]$ is not connected and $\supp{\mu}$ is not contained in a connected component of $G$, then it does not exist a stochastic process consistent with $G$ which satisfies \eqref{limite1bis}.
\item[c.] if $G[\supp{\mu}]$ is not connected and $\supp{\mu}$ is contained in a connected component of $G$,
    then each homogeneous Markov
chain consistent with $G$ does not  satisfy \eqref{limite1bis};

\end{itemize}
\end{theorem}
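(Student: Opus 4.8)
The plan is to prove the three parts separately, in the order a, b, c, since parts b and c are negative results whose proofs share a common structural idea.

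\medskip

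\emph{Part a.} Here $G[\supp{\mu}]$ is connected, so the construction \eqref{ordinati} applied to $\mu$ itself (rather than to the perturbations $\mu_k$) on the subgraph $G[\supp{\mu}]$ produces a single transition matrix $P^{(\mu,G[\supp{\mu}])}$. First I would observe, exactly as in the discussion preceding Lemma \ref{lem:D}, that the pair $(\mu, P^{(\mu,G[\supp{\mu}])})$ is reversible, that the matrix is irreducible because $G[\supp{\mu}]$ is connected, and that it is aperiodic because $p_l \geq \tfrac12$ for every $l$. Consequently $\mu$ is the unique stationary distribution and the chain is ergodic, so the ergodic theorem for finite irreducible aperiodic Markov chains gives \eqref{limite1bis} almost surely for every initial distribution on $\supp{\mu}$. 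Consistency with $G$ is immediate: transitions with positive probability occur only along edges of $G[\supp{\mu}]\subset G$, and $X(t)=X(t+1)$ is always allowed. This part is routine and I expect no obstacle.

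\medskip

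\emph{Part b.} Suppose $\supp{\mu}$ meets two distinct connected components of $G$, say it contains $s\in\calC_1$ and $t\in\calC_2$ with $\calC_1\neq\calC_2$. If a process $X$ is consistent with $G$, then along any trajectory $X$ can never pass from a vertex of $\calC_1$ to a vertex of $\calC_2$, since adjacency is required at every step and there is no edge between the components (and $X(m)=X(m+1)$ keeps one inside a component). Hence the whole trajectory stays in the single component containing $X(0)$. Therefore at most one of $\mu(s)$ and $\mu(t)$ can equal the corresponding empirical limit: if $X(0)\in\calC_1$ the empirical frequency of $t$ is $0\neq\mu(t)$, and symmetrically. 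Averaging over the initial distribution, \eqref{limite1bis} fails with probability one for at least one state. The only mild subtlety is making the argument for a general (not necessarily Markov) process, but consistency is a pathwise constraint, so the reasoning is purely combinatorial.

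\medskip

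\emph{Part c.} Now $G[\supp{\mu}]$ is disconnected but $\supp{\mu}$ lies in one component $\hat\calS$ of $G$; as in the text we may assume $G$ is connected and $\hat\calS=\calS$, so $\supp{\mu}\subsetneq\calS$ and there is at least one vertex $z$ with $\mu(z)=0$. The key point is that, since $G[\supp{\mu}]$ is disconnected, any path in $G$ joining two vertices of $\supp{\mu}$ lying in different pieces of $G[\supp{\mu}]$ must exit $\supp{\mu}$; so the chain is forced to spend a positive asymptotic fraction of its time outside $\supp{\mu}$, or else it gets trapped in one piece. I would formalize this as follows: let $P$ be the transition matrix of a homogeneous consistent chain satisfying \eqref{limite1bis}. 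Then $\mu$ must be invariant for $P$ (the empirical measure converges a.s.\ to $\mu$ regardless of start, which forces $\mu P = \mu$ on the finite state space). Invariance of $\mu$ together with $\mu(z)=0$ forces $p_{y,z}=0$ for every $y\in\supp{\mu}$, i.e.\ the chain started in $\supp{\mu}$ never leaves $\supp{\mu}$; hence $\supp{\mu}$ is $P$-closed. But $G[\supp{\mu}]$ being disconnected means $\supp{\mu}$ splits into two nonempty subsets $\calS_1,\calS_2$ with no $G$-edge between them, so by consistency there is no $P$-transition between them either; thus $\calS_1$ is itself a closed set, and starting in $\calS_1$ the empirical frequency of any point of $\calS_2$ is $0\neq\mu(\cdot)$, contradicting \eqref{limite1bis}.

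\medskip

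The main obstacle is the step in part c asserting that $\mu$ must be $P$-invariant and hence that $\supp{\mu}$ is closed under $P$: one has to argue carefully that almost-sure convergence of the empirical measure to a deterministic $\mu$ for a finite-state homogeneous chain forces $\mu$ to be a stationary distribution, and then deduce closure of $\supp{\mu}$ from $\mu P=\mu$ and nonnegativity. Everything else is combinatorial bookkeeping about connected components.
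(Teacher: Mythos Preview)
Your arguments for parts a and b are correct and coincide with the paper's: part a is just the ergodic theorem applied to the reversible irreducible chain $P^{(\mu,G[\supp{\mu}])}$, and part b is the pathwise observation that a consistent process is confined to the connected component of its initial state, so at least one point of $\supp\mu$ is never visited.

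For part c your route is correct but genuinely different from the paper's. The paper first reduces to irreducible $P$ (arguing that for reducible $P$ the almost-sure limit, if it exists, depends on the starting state) and then proceeds quantitatively: since $G[\supp{\mu}]$ is disconnected while $P$ is irreducible, some $P$-path joining two pieces of $\supp\mu$ must exit $\supp\mu$ at a vertex $s^{j_{a+1}}$ with $p_{j_a,j_{a+1}}>0$, and the ergodic theorem gives
\[
\mu(s^{j_{a+1}})\ \ge\ \mu(s^{j_a})\,p_{j_a,j_{a+1}}\ >\ 0,
\]
contradicting $s^{j_{a+1}}\notin\supp\mu$. You instead extract the invariance $\mu P=\mu$ directly from \eqref{limite1bis}, deduce that $\supp\mu$ is $P$-closed, and then use consistency with $G$ to split $\supp\mu$ into two disjoint $P$-closed pieces. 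This is arguably cleaner: it bypasses the preliminary reduction to irreducible chains and never invokes the ergodic theorem. The paper's approach, on the other hand, yields the slightly sharper information that any ``bridge'' vertex outside $\supp\mu$ receives strictly positive limiting frequency.

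Two small wording points that do not affect correctness. First, the step you flag as the main obstacle is actually routine and does not require ``regardless of start'': for a single initial law $\lambda$, taking expectations in \eqref{limite1bis} and using bounded convergence gives $\tfrac{1}{t}\sum_{m=0}^{t-1}\lambda P^m\to\mu$, and applying $P$ to this Ces\`aro limit yields $\mu P=\mu$. Second, instead of ``starting in $\calS_1$'', finish by noting that \eqref{limite1bis} forces the chain to enter $\supp\mu$ almost surely; once there it is trapped in a single piece $\calS_i$, hence visits the other piece only finitely often, contradicting \eqref{limite1bis}.
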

\begin{proof} We prove a. Since $G[\supp{\mu}]$ is connected, then the transition matrix $P^{(\mu ,
G[\supp{\mu}])}$ is well defined. Moreover, $\mu $ is the unique
invariant distribution of $ P^{(\mu , G[\supp{\mu}])} $ because $ P^{(\mu , G[\supp{\mu}])} $  is
irreducible. Now, by applying the ergodic theorem, one has
\eqref{limite1bis}. The consistence of $X$ with $G$ follows from the
fact that, for $l \not= m$, $p_{l,m}
>0 $ implies $\{s^l,s^m\} \in E $.

We prove b. by contradiction. Assume that \eqref{limite1bis}
holds true for a stochastic process $(X(t) :t \in \N)$ which is consistent with $G$. Then for each $s \in \supp {\mu }$ one should have
\begin{equation}\label{infinitamente}
    \mathbb{P} ( \{ X(t ) =s, \,\,\, i.o.\} ) =1 .
\end{equation}
Let us consider $ s' , s'' \in \supp{\mu} $ which belong to two
different connected components of $G$. By
\eqref{infinitamente}, it  follows that $\mathbb{P} (T < \infty) =1$
where
$$
T :=T_{s'} \wedge T_{s''},
$$
with
$$
T_s : = \inf \{ t\in \N : X (t) = s\}, \qquad s \in \calS.
$$
 Without loss
of generality one can assume that $\mathbb{P} (T = T_{s'}) >0$.
Then, by the consistence of $X $ with the graph $G$, one has that
$$
\mathbb{P} (  \{t \in \N: X(t ) =s'' \} = \emptyset | X (T) = s' )
=1.
$$
 Therefore
 $$
 \mathbb{P} ( \{ X(t ) =s'', \,\,\, i.o.\} ) <1 ,
 $$
and this contradicts \eqref{infinitamente}.

Now, we prove c. Without loss of generality we can consider that the graph $G$ is connected, thus the connected component containing $\supp{\mu}$ is the whole space $\calS $.  Now,  we can reduce to the case of irreducible Markov chains. Indeed, if a Markov chain is not irreducible, \eqref{limite1bis} cannot be true, because the limit in formula \eqref{limite1bis}, admitting that it exists,  depends on the initial state of the Markov chain.

By hypothesis, there exist  two connected components of $G[\supp{\mu}]$, say $G[A]$ and $G[B]$, with $A,B \subset \supp{\mu}$ and $A \cap B=\emptyset$, and a path $\gamma =(s^{j_1},s^{j_2}, \ldots , s^{j_n} )$ of $G$ such that the transition matrix $P =(p_{i,j} : i, j =1, \ldots , N )$ has
$p_{j_{r} , j_{r+1} }>0 $, for $r=1, \ldots, n-1$ and $s^{j_a} \in A$, $s^{j_{a+1}} \notin \supp{\mu}$ and $s^{j_{a+h}} \in B$, for some $a=1, \dots, n-2$ and $h=2, \dots, n-a$.

Assuming that the homogeneous Markov chain $X$ satisfies \eqref{limite1bis},
we proceed by contradiction.
 Since the Markov chain is irreducible, then ergodic theorem guarantees that
\begin{equation}
\label{erge1}
\lim_{N \to \infty} \frac{|\{t \leq N : X(t)=s^{j_{a}} \}|}{N}
\end{equation}
does exist almost surely. Moreover, by  \eqref{limite1bis}, one has
\begin{equation}
\label{erge1b}
\lim_{N \to \infty} \frac{|\{t \leq N : X(t)=s^{j_{a}} \}|}{N} = \mu(s^{j_{a}}) \text{ a.s.}
\end{equation}
Therefore, \eqref{erge1b} gives
$$
\mu(s^{j_{a+1}}) =\lim_{N \to \infty} \frac{|\{t \leq N : X(t)=s^{j_{a+1}} \}|}{N} \geq
$$
$$
\geq \lim_{N \to \infty} \frac{|\{t \leq N : X(t)=s^{j_{a}}, X(t+1)=s^{j_{a+1}} \}|}{N}=\mu(s^{j_{a}}) \cdot p_{j_{a} , j_{a+1} }>0 .
$$
This is a contradiction since $s^{j_{a+1}} \notin \supp{\mu}$.
\end{proof}

\begin{rem}\label{osserva}
Suppose that we are under hypothesis (ii) and
let us consider $\varepsilon >0 $ and a fixed $k \geq \lceil
\frac{1}{\varepsilon}\rceil$. Part a. of
Theorem \ref{thdin2}  states that it is possible to select a homogeneous Markov chain $X=(X(t) : t
\in \N)$ having transition matrix equal to  $P^{(\mu_{k} , G)}$ (see
\eqref{mukka} and \eqref{ordinati}), which
 satisfies
 \begin{equation}\label{ergo}
\lim_{t \to \infty}  \left | \frac{1}{t} \sum_{m=0}^{t-1}
\mathbf{1}_{\{ X (m)=s \}} - \mu_k(s)  \right | =0  ,
\qquad s \in \calS \qquad a.s..
\end{equation}
Then, \eqref{mukka1} and \eqref{ergo} gives that
\begin{equation}\label{ergono}
\lim_{t \to \infty}  \left | \frac{1}{t} \sum_{m=0}^{t-1}
\mathbf{1}_{\{ X (m)=s \}} - \mu(s)  \right | \leq \varepsilon  ,
\qquad s \in \calS \qquad a.s..
\end{equation}
  Furthermore, $X $ is
consistent with $G$.

Some consequences of Theorems \ref{dinamic2} and \ref{thdin2} arise.
Let us consider  $f : \calS \to \mathbb{R}$. 

Under condition of Theorem \ref{dinamic2} or of Theorem \ref{thdin2}
a. one obtains
\begin{equation}\label{ergo1}
\lim_{t \to \infty }  \frac{1}{t} \sum_{m=0}^{t-1} f( X (m)) =
\mathbb{E}_\mu (f), \qquad a.s.,
\end{equation}
where $\mathbb{E}_\mu$ is the expected value with respect
to the distribution $\mu$, i.e.
$$
\mathbb{E}_\mu (f)=\sum_{s \in \calS}f(s)\mu(s).
$$
Moreover, when \eqref{ergo} holds true, then
\begin{equation}\label{ergo2}
    \lim_{t \to \infty } \left | \frac{1}{t} \sum_{m=0}^{t-1} f( X (m))
- \mathbb{E}_\mu (f) \right | \leq  \varepsilon     \max_{s \in \calS} |f(s)|       , \qquad a.s..
\end{equation}
Thus, accepting the  error $  \varepsilon     \max_{s \in \calS} |f(s)|   $ given in \eqref{ergo2}, that can be taken arbitrarily small, one can always use an homogeneous Markov chain to numerically compute  $\mathbb{E}_\mu (f)$.
\end{rem}

\subsection{A remark on suitable criteria for graph selection}
We point out that a proper selection of the graph may 
lead to a more efficient MCMC procedure. In particular, graphs can contribute to the reduction of the number of possible transitions among states, as also Gibbs sampler proposes (see e.g. \cite{geman1984}). In fact, when the number of the states is extremely large, then the unconstrained transition probabilities involving all the pairs of states may be too small, hence too difficult to simulate.
   In this respect, a proper choice of the graph should ensure the connections among highly probable states,
thus avoiding the creation of metastable states (sometimes called \textit{wells}, see \cite{Landim22, LLM2018}). Indeed, wells are states in which the Markov chain is expected to spend an extremely long time before being able to visit other high-probability ones. This would increase dramatically the mixing time and the convergence speed of the MCMC algorithm (see e.g. \cite{Frigessi2, Frigessi3, Frigessi1}).

In this context, a very useful reading are \cite{Daley68, FK13, PW96}, where the (stochastically) monotone MCMC is explored. 
In details, a Markov chain is said to be stochastically monotone when the states space is endowed with a partial order and there exists a coupling of the chain with itself that maintains the partial order of the states space at any time.
Stochastically monotone Markov chains are particularly simple in the simulation procedures (see \cite{FK13} and \cite{PW96} for connections with the perfect simulation literature). 
Now, let us assume that the states space $\calS $ is endowed with a partial order and consider the target distribution $\mu$ on $\calS$. Naturally, there are infinite Markov chains satisfying \eqref{limite1bis}. Some of them might be stochastically monotone, i.e. simple in the simulation process. The role of the graph in obtaining stochastically monotone Markov chains might then be crucial.

As a paradigmatic example, we can take the classical ferromagnetic Ising model assigning a spin $\sigma(i) \in \{-1,+1\}$ to each vertex $i \in V$ and assume that the set $\calS=\{-1,+1\}^V$ is endowed with a partial order such that $\sigma' \preceq \sigma''$ if and only if $\sigma'(i) \leq \sigma''(i)$ for each $i \in V$. In this situation, we have that the Markov chain identified by the Gibbs sampler is stochastically monotone, and this property leads to affordable simulation exercises for the convergence towards the Gibbs measure of the ferromagnetic Ising model (see \cite{PW96} and, more recently, \cite{DL2012}). There are also other Markov chains converging to the Gibbs measure which do not maintain the ordering of the states space (see e.g. \cite{cd18, PW96}).

It is not difficult to construct other examples for non-ferromagnetic Ising models (where the Gibbs sampler is not stochastically monotone) such that Markov chains consistent with suitably defined graphs are stochastically monotone.

\section*{Product graphs and product distributions}

We now introduce the standard definition of product of graphs, as in \cite{sabidussi}.  It leads to a simplification of the MCMC simulations. 


\begin{definition}     \label{decomponibile}
Consider two graphs $G_1= (\calS_{1},E_1), G_2=
(\calS_{2},E_2)$.  The strong product $G_1\boxtimes G_2$ is a graph $G= (\calS , E) $, where $\calS = \calS_1 \times  \calS_2 $ and $E$ collects the couples $\{(s_{1} , s_{2} )  ,  (\bar s_{1} , \bar s_{2} )\} $, with $(s_{1} , s_{2} )  ,  (\bar s_{1} , \bar s_{2} ) \in \calS$,  such that one of the following condition is verified
\begin{itemize}
\item $\{s_1, \bar{s}_1\} \in E_1$ and $s_2= \bar{s}_2$;
\item $s_1=\bar{s}_1$ and $\{s_2, \bar{s}_2\} \in E_2$;
\item $\{s_1, \bar{s}_1\} \in E_1$ and $\{s_2, \bar{s}_2\} \in E_2$.
\end{itemize}

\end{definition}

Since the strong product of graphs is associative (see \cite{sabidussi}), then Definition \ref{decomponibile} can be extended to any collection of $r>2$ graphs obtaining $G=G_1 \boxtimes \dots \boxtimes G_r$.


Let us consider now $r$ finite sets $\calS_1 , \dots , \calS_r$ and take a product distribution $\mu=\prod_{h=1}^r \mu_{h}$, where
$\mu_{h}$ is a distribution on the space $\calS_{h}$.
We construct $r$ independent Markov chains
$X_{1}=(X_{1}(t):t \in \N), \dots, X_{r}=(X_{r}(t):t \in
\N)$ such that the $h$-th Markov chain $X_{h}$ has state space
$\calS_{h}$ and an arbitrary initial distribution
$\lambda_{h}=(\lambda_{h}(s_{h}): s_{h} \in \calS_{h})$, for each $h=1, \ldots,r$.

Moreover, by replacing $\calS$ with $\calS_{h}$ and
$\mu$ with $\mu_{h}$, we replicate the construction  provided
before Theorem \ref{dinamic2}. In so doing, we take $k \in \N$
to define the distribution 
$$(\mu_{h})_k :=((\mu_{h})_k(s_{h}):
s_{h} \in \calS_{h}).
$$

Now, take a sequence of increasing times $(t_\ell^{(h)}:
\ell \in \N)$, such that
\begin{equation}\label{minimo}
\min_{h = 1, \ldots ,
r}t^{(h)}_{\ell+1}-t^{(h)}_\ell \geq c \ell^{5N-1},
\end{equation}
with $c $ a positive constant.

 The transition matrices of $X_{h}$ are $(P_{h}(t):t \in \N)$
as in \eqref{Penne}:
\begin{equation}\label{Penne2}
    P_{h}(t) = \sum_{k =1}^{\infty} P^{((\mu_{h})_k,G_h)} \mathbf{1}_{\{ t \in [t_k^{(h)}
, t_{k+1}^{(h)} ) \}}.
\end{equation}

We introduce the 
Markov chain 
\begin{equation} \label{Xdef}
X= \Big ( X(t) =(X_{1}(t), \dots, X_{r} (t))\in \calS : t\in \N \Big ) . 
\end{equation}

Next result is similar to Theorem \ref{dinamic2} but it
is based on the  independent Markov chains constructed above. 

\begin{theorem}\label{dinamic3}
Let $\calS = \prod_{h =1}^r  \calS_{h}$ and $G (\calS, E)=G_1(\calS_1, E_1)        \boxtimes  \cdots  \boxtimes G_r(\calS_r, E_r)  $. 
Let us consider a
product distribution $\mu=\prod_{h=1}^r \mu_{h}$ and consider the
Markov chains $X$ of \eqref{Xdef}. 

Then
\begin{equation}\label{XX11}
    \lim_{t \to \infty}\frac{1}{t}\sum_{m=0}^{t-1}
    \mathbf{1}_{\{X(m)=s\}}=\lim_{t \to \infty}\frac{1}{t}\sum_{m=0}^{t-1} \prod_{h=1}^r
    \mathbf{1}_{\{X_{h}(m)=s_{h}\}}=\prod_{h=1}^r
    \mu_{h}(s_{h})=\mu(s),
\end{equation}
for each $s=(s_{1}, \dots, s_{r}) \in \calS$.
\end{theorem}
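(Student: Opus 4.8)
The plan is to reduce \eqref{XX11} to the one-chain statement of Theorem \ref{dinamic2} applied coordinatewise, and then to glue the coordinates together by means of auxiliary independent sequences. Two of the three equalities in \eqref{XX11} are purely algebraic: $\mathbf{1}_{\{X(m)=s\}}=\prod_{h=1}^r\mathbf{1}_{\{X_h(m)=s_h\}}$ for every $m$ by the definition \eqref{Xdef} of $X$, and $\prod_{h=1}^r\mu_h(s_h)=\mu(s)$ by the definition of the product distribution; so it suffices to prove $\frac{1}{t}\sum_{m=0}^{t-1}\mathbf{1}_{\{X(m)=s\}}\to\mu(s)$ almost surely. (Incidentally, $X$ is consistent with $G=G_1\boxtimes\cdots\boxtimes G_r$: by Definition \ref{decomponibile}, $(X_1(t),\dots,X_r(t))$ and $(X_1(t+1),\dots,X_r(t+1))$ are adjacent in the strong product precisely because each pair $X_h(t),X_h(t+1)$ is adjacent in $G_h$, which holds since $X_h$ is consistent with $G_h$.)

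First I would fix $h$. Condition \eqref{minimo} forces $t^{(h)}_{\ell+1}-t^{(h)}_\ell\geq c\,\ell^{5N-1}$, so Remark \ref{remgenerale} applies to $X_h$ and yields $\frac{1}{t}\sum_{m<t}\mathbf{1}_{\{X_h(m)=s_h\}}\to\mu_h(s_h)$ almost surely. What I actually need is the slightly finer structure built inside the proof of Theorem \ref{dinamic2}: there is a sequence $(Y_h(t):t\in\N)$ of \emph{independent} $\calS_h$-valued random variables, defined jointly with $X_h$, such that the law of $Y_h(t)$ equals $(\mu_h)_k$ whenever $t$ lies in the block $[t^{(h)}_k,t^{(h)}_{k+1})$ (hence it converges to $\mu_h$ in total variation by \eqref{mukka1}), and such that the maximal-coupling and Borel-Cantelli estimates \eqref{X=Y}--\eqref{Acappuccio2} give $\frac{1}{t}\,|\{m<t:X_h(m)\neq Y_h(m)\}|\to0$ almost surely. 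Since $X_1,\dots,X_r$ are independent, the couplings $(X_h,Y_h)$ can be realized on a common probability space so that the pairs, and in particular the sequences $Y_1,\dots,Y_r$, are independent across $h$.

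Next I would set $Y(t)=(Y_1(t),\dots,Y_r(t))\in\calS$. Because each coordinate sequence is independent in $t$ and the coordinates are mutually independent, $(Y(t):t\in\N)$ is a sequence of independent $\calS$-valued random variables with $\mathbb{P}(Y(t)=s)=\prod_{h=1}^r(\mu_h)_{k_h(t)}(s_h)\to\prod_{h=1}^r\mu_h(s_h)=\mu(s)$ as $t\to\infty$, where $k_h(t)$ denotes the index of the block containing $t$ in the $h$-th coordinate. The strong law of large numbers for independent, uniformly bounded random variables (Kolmogorov's criterion, together with the convergence of the averages of $\mathbb{P}(Y(m)=s)$ to $\mu(s)$, which is the very ingredient used at the end of the proof of Theorem \ref{dinamic2}) then gives $\frac{1}{t}\sum_{m=0}^{t-1}\mathbf{1}_{\{Y(m)=s\}}\to\mu(s)$ almost surely. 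Finally, $X(m)\neq Y(m)$ forces $X_h(m)\neq Y_h(m)$ for some $h$, so $|\{m<t:X(m)\neq Y(m)\}|\leq\sum_{h=1}^r|\{m<t:X_h(m)\neq Y_h(m)\}|=o(t)$ almost surely (the sum being finite); hence $\frac{1}{t}\sum_{m<t}\mathbf{1}_{\{X(m)=s\}}$ and $\frac{1}{t}\sum_{m<t}\mathbf{1}_{\{Y(m)=s\}}$ share the almost sure limit $\mu(s)$, which is \eqref{XX11}.

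The delicate point, and the reason this is not a one-line corollary of Theorem \ref{dinamic2}, is that coordinatewise empirical convergence does \emph{not} by itself force joint convergence to the product measure: already for deterministic coordinate sequences the time average of a product of indicators need not equal the product of the individual limits. What rescues the argument is the strictly stronger information hidden in the proof of Theorem \ref{dinamic2}, namely that each $X_h$ is asymptotically a time-inhomogeneous i.i.d.\ sequence; independence of the $r$ chains then lifts this to the joint chain. So the real work is to extract cleanly that i.i.d.-type coupling with the $o(t)$ discrepancy bound from the earlier proof and to arrange the couplings to be independent across coordinates; the hypothesis that $G$ is the strong product of the $G_h$ is exactly what is needed both for this step and for the consistency statement. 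After that, the Borel-Cantelli and strong-law steps above close the argument.
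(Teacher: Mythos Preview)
Your proof is correct and follows essentially the same route as the paper's: introduce auxiliary independent sequences $Y_h$ coupled to $X_h$ via the maximal-coupling estimates of Theorem \ref{dinamic2}, use independence across $h$ to form the product sequence $Y$, apply the strong law to $Y$, and transfer back to $X$ through the $o(t)$ discrepancy. The only cosmetic difference is that the paper first isolates the ``burn-in'' set $A=\bigcup_h\bigcup_\ell[t^{(h)}_\ell,t^{(h)}_\ell+\ell^{2N})$ and shows it has zero density before invoking the coupling, whereas you absorb this directly into the statement $|\{m<t:X_h(m)\neq Y_h(m)\}|=o(t)$; your closing paragraph on why coordinatewise convergence alone is insufficient is a nice clarification that the paper leaves implicit.
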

\begin{proof}
By \eqref{minimo}  follows that
$$
\lim_{t \to \infty} \frac{   \left   | [0,t] \cap \left (
\bigcup_{h=1}^r \bigcup_{\ell=1}^\infty [   t^{(h)}_\ell ,
t^{(h)}_\ell + \ell^{2N}) \right )   \right       |}{t}   =0.
$$
In fact, for each $h = 1, \ldots , r$,
$$
\lim_{t \to \infty} \frac{\left | [0,t] \cap \left (
\bigcup_{\ell=1}^\infty [   t^{(h)}_\ell , t^{(h)}_\ell +
\ell^{2N}) \right ) \right |}{t}   =0,
$$
since
$$
\lim_{\ell \to \infty}\frac{\ell^{2N}}{t^{(h)}_{\ell + 1}
-t^{(h)}_{\ell } }\leq \lim_{\ell \to \infty}\frac{\ell^{2N}}{c
\ell^{5N -1} } =0.
$$
Thus, the times in $\bigcup_{h=1}^r \bigcup_{\ell=1}^\infty [
t^{(h)}_\ell , t^{(h)}_\ell + \ell^{2N}) $ can be neglected in
the procedure of checking \eqref{XX11}, i.e.
$$
    \lim_{t \to \infty}\frac{1}{t}\sum_{m=0}^{t-1}
    \mathbf{1}_{\{X(m)=s\}}=
    \lim_{t \to \infty}\frac{1}{t}\sum_{m=0}^{t-1}
    \mathbf{1}_{\{X(m)=s\}}\cdot \mathbf{1}_{\{m \notin
\bigcup_{h=1}^r \bigcup_{\ell=1}^\infty [   t^{(h)}_\ell ,
t^{(h)}_\ell + \ell^{2N})  \}}
$$
and also
$$
    \lim_{t \to \infty}\frac{1}{t}\sum_{m=0}^{t-1}
    \mathbf{1}_{\{X(m)=s\}}=
    \lim_{t \to \infty}\frac{1}{t}\sum_{m=0}^{t-1}\left[
    \mathbf{1}_{\{X(m)=s\}}+ \mathbf{1}_{\{m \in
\bigcup_{h=1}^r \bigcup_{\ell=1}^\infty [   t^{(h)}_\ell ,
t^{(h)}_\ell + \ell^{2N})  \}}\right].
$$
Let us define the set of times $A =\bigcup_{h =1}^r
\bigcup_{\ell=1}^\infty   [ t^{(h)}_\ell , t^{(h)}_\ell +
\ell^{2N})$.
Now we introduce the independent  random variables
$( Y_{h}  ( t  )  :  t \in \N , h= 1, \ldots , r)$.  The random variables $( Y_{h}  ( t  )  :  t \in \N )$,  with  label $h $, take
value on $S_{h}$.  Moreover,  if
$    t \in[t^{(h)}_k , t^{(h)}_{k+1})$ then
   $Y_{h}  ( t  )  $  has distribution $\mu_{h,k}$.


We now adapt formula \eqref{X=Y} to the Markov chain $X_{h}$.
If $\bar{t} \notin A$ then for each $h = 1, \ldots , r $ there exists  $ \bar{\ell} _h $ such that $\bar{t} $ belong to $  [ t^{(h)}_{\bar{\ell}_h} , t^{(h)}_{\bar{\ell}_h+1}  )$.
In this case formula \eqref{X=Y} becomes
\begin{equation}\label{Acapp22}
    \mathbb{P} ( X_{h}(\bar t) = Y_{h}(\bar t) )
 \geq 1 -  \exp \left (-\hat{c}_N \left
\lfloor\frac{{\bar \ell_h}^{N+1}}{N-1} \right \rfloor \right ),
\end{equation}
where we recall that $\hat{c}_N=\frac{1}{[2(N-1)^2]^{N-1}}$.

Hence, for any $\bar t \notin A$ one has that there
exist $\bar \ell_1,\ldots, \bar \ell_r \in \N$ such that $\bar t \in
\bigcap_{h=1}^r[ t^{(h)}_{\bar\ell_h} + {\bar \ell_h}^{2N} ,
t^{(h)}_{\bar \ell_h +1} ) $. Therefore, using the independence of the random variables $Y$'s and the independence
of the Markov chains $X$'s, one has
\begin{equation}\label{Acapp23}
    \mathbb{P} ( (X_{1}(\bar t), \ldots,
    X_{r}(\bar t))= (Y_{1}(\bar t), \ldots,  Y_{r}(\bar t)) )
 \geq  1 - \sum_{h=1}^r \exp \left (-\hat{c}_N \left
\lfloor\frac{{\bar \ell_h}^{N+1}}{N-1} \right \rfloor \right ).
\end{equation}
For $\bar t \in
\bigcap_{h=1}^r[ t^{(h)}_{\bar\ell_h} + {\bar \ell_h}^{2N} ,
t^{(h)}_{\bar \ell_h +1} )
$, the distribution
of $(Y_{1}(\bar t), \ldots, Y_{r}(\bar t))$ coincides
with $\prod_{h=1}^r\mu_{h,\bar {\ell}_h}$.

Thus, we have
\begin{equation}\label{TV2}
\left|\left|\mu-\prod_{h=1}^r\mu_{h,\bar \ell _h}\right|\right|_{TV}\leq
\sum_{h=1}^r{\bar \ell_h}^{-1}.
\end{equation}
Notice that any $\bar \ell _h $ increases to infinity when $\bar t$ goes to infinity. Therefore, the left-hand side of \eqref{TV2} goes to zero
as $\bar t$ goes to infinity.
Inequalities \eqref{Acapp23} and \eqref{TV2} give
an upper bound for the distance in total variation between the law of $X (\bar t )$ and the distribution $\mu$.

Now, by following the arguments in the proof of Theorem \ref{dinamic2}, 
we obtain equation \eqref{XX11}.
\end{proof}

\section{Conclusions}

The paper adds to the MCMC literature. In particular, it deals with the existence and identification of a Markov chain which is constrained to move among adjacent nodes of a graph and whose empirical distribution coincides with a prefixed one. In so doing, we classify the cases in which such a Markov chain exists and, in case of existence, when it can be homogeneous or not.

The presence of assigned constraints let the paper be quite different with respect to the classical Metropolis-Hastings Markov chain methods. Indeed, one of the most relevant consequences of the graph-based constraint is the possibility of not having homogeneous Markov chains satisfying question \textbf{Q}, but only nonhomogeneous ones.

The problem is also extended to the particular case of strong products of graph, where also the given distributions are of product type. In this context, we give a result which allows researchers to study the convergence of one Markov chain with a large amount of states by using indipendent Markov chains with small state spaces -- hence reducing the computational complexity of related simulation models. 

Some suggestions on the speed of convergence are also provided. However, the detailed analysis of this important point may be the topic for future research.

\bibliographystyle{abbrv}

\bibliography{EmilioRoy}

\end{document}